\documentclass[a4paper,12pt]{amsart}
\usepackage{amsmath,amssymb,color,setspace}
\usepackage{fullpage,graphicx}
\usepackage[pdftitle = {Simplicial arrangements with special vertex}]{hyperref}

\newtheorem{Lemma}{Lemma}[section]
\newtheorem{Theorem}[Lemma]{Theorem}
\newtheorem{Proposition}[Lemma]{Proposition}

\theoremstyle{definition}
\newtheorem{Definition}[Lemma]{Definition}
\newtheorem{Remark}[Lemma]{Remark}
\newtheorem{Example}[Lemma]{Example}
\numberwithin{equation}{section}

\DeclareMathOperator{\SL}{SL}
\DeclareMathOperator{\vol}{vol}

\definecolor{dblue}{rgb}{0.0,0.1,0.6}
\newcommand{\df}[1]{{\bf\color{dblue} #1}}

\title{Simplicial arrangements with special vertex}

\author{Michael~Cuntz}
\address{Michael Cuntz, Leibniz Universit\"at Hannover,
Institut f\"ur Algebra, Zahlentheorie und Dis\-krete Mathematik,
Fakult\"at f\"ur Mathematik und Physik,
Welfengarten 1,
D-30167 Hannover, Germany}
\email{cuntz@math.uni-hannover.de}
\urladdr{https://www.iazd.uni-hannover.de/de/cuntz}

\subjclass[2020]{52C35, 20F55}
\keywords{simplicial, arrangement of hyperplanes, affine Wel group}

\begin{document}

\begin{abstract}
Shi-Catalan and affine Weyl arrangements are obtained in a similar way: they consist of a Weyl arrangement embedded into a higher dimensional space together with shifted copies. We consider the more general case when the Weyl arrangement is replaced by an arbitrary finite arrangement of hyperplanes. Such an arrangement is called an arrangement with special vertex. As our main result we prove that the number of parallel classes of lines in certain simplicial arrangements with special vertex is at most $12$.
\end{abstract}

\maketitle

\section{Introduction}

Simplicial arrangements are arrangements of hyperplanes in a finite dimensional real vector space such that every chamber is an open simplicial cone \cite{M41}. They turned out to be the right context to solve Brieskorn's conjecture \cite{D72}, but so far they are only conjectured to be all known \cite{G09}. Real reflection arrangements are the most important special cases.

One can extend the notion of simpliciality from finite arrangements to the infinite case (cf.\ \cite{CMW17}) in which the open simplicial cones are contained in a cone called the Tits cone. This includes the reflection arrangements for affine Weyl groups, for which it was recently shown that their complexified complements also define $K(\pi,1)$-spaces \cite{PS21}, thus extending Deligne's result.

A class of simplicial arrangements which is completely classified is the class of crystallographic arrangements \cite{C10} corresponding to Weyl groupoids \cite{CH10}. A first result on a classification of affine crystallographic arrangements is \cite{C14}, where we consider the special case when all lines are parallel to lines through one particular point and such that the lines through this point have the structure of a Weyl groupoid of rank two (or equivalently a Conway-Coxeter frieze pattern).

In this note we consider (possibly infinite) arrangements of hyperplanes which consist of a finite linear arrangement of hyperplanes called the special vertex together with parallel copies of these hyperplanes (see Definition \ref{def:spvt}), like for example the arrangements in Figure \ref{G2:1315}.
\begin{figure}[ht]
\includegraphics[width=0.4\textwidth]{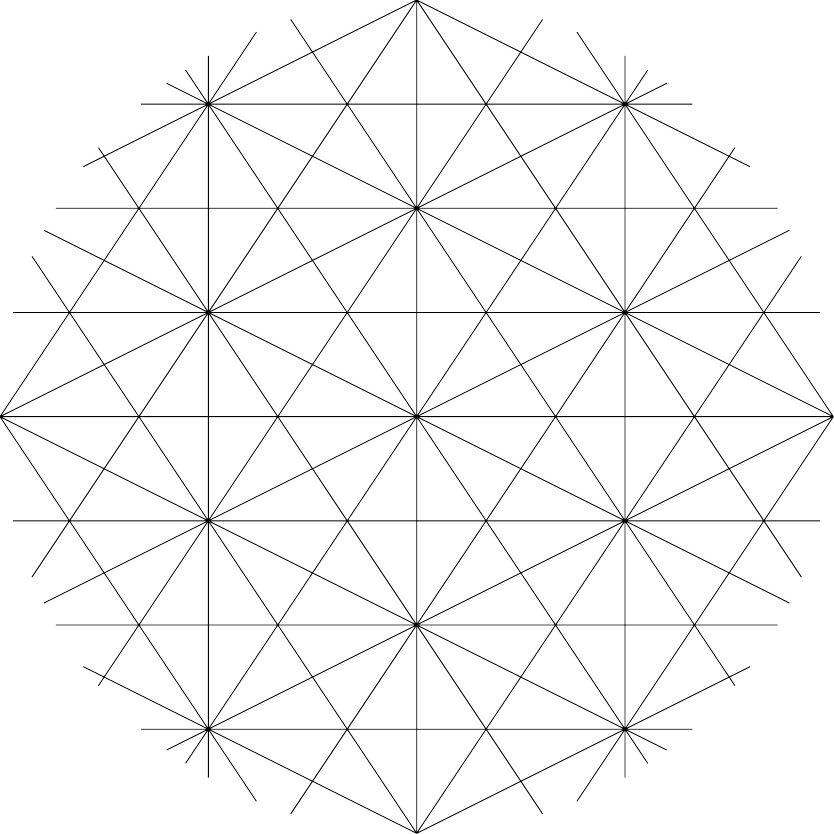}
\includegraphics[width=0.4\textwidth]{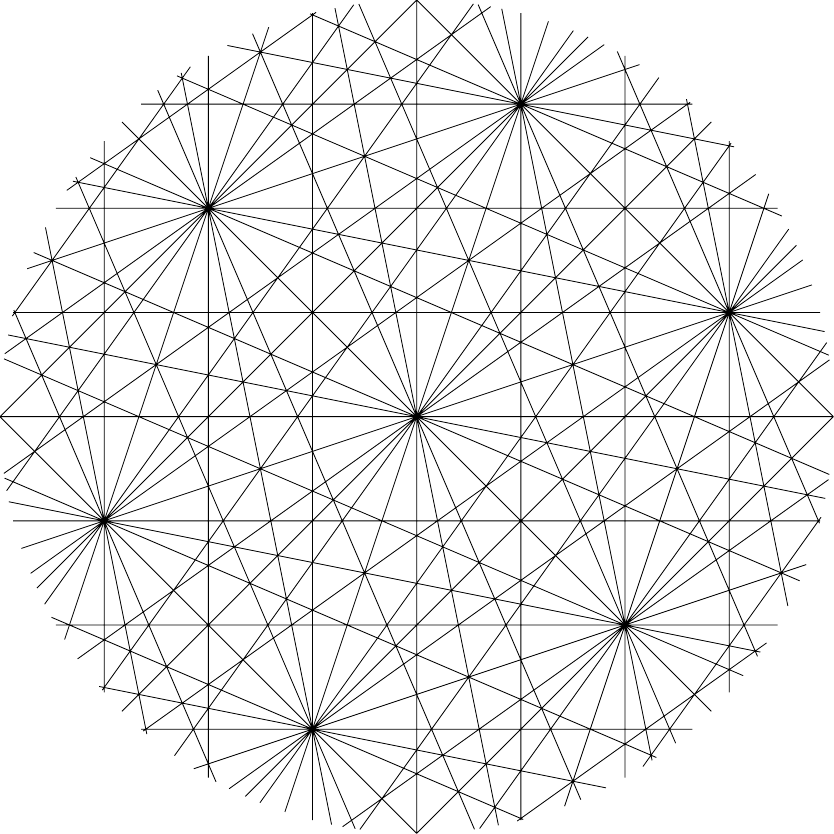}
\caption{\label{G2:1315}Toric $G_2$ and $(1,3,1,5,1,3,1,5,1,3,1,5)$ arrangement, cf.\ \cite{C14}}
\end{figure}

In the special case of rank three and when the shifts in all directions are parametrized by $\mathbb{Z}$, one can recover the arrangement via its Pl\"ucker matrix (Definition \ref{def:plueck}).
This matrix can also be viewed as a frieze pattern with coefficients as introduced in \cite{CHJ20}.

We focus on simplicial arrangements with special vertex. First we observe:
\begin{Theorem}[Theorem \ref{thm:1}]
Let $v_1,\ldots,v_n\in\mathbb{Z}^2$, $\mathcal{A}$ be the corresponding toric arrangement with special vertex and $(c_{i,j})_{i,j}$ the Pl\"ucker matrix and
for $1\le i_1,\ldots,i_k \le n$ let
$d_{i_1,\ldots,i_k}:=\vol(v_{i_1},\ldots,v_{i_k}):=\gcd\{ c_{i,j} \mid i,j\in\{i_1,\ldots,i_k\}\}$.
Then $\mathcal{A}$ is simplicial if and only if
\begin{equation}
\sum_{k\ge 2} \sum_{1\le i_1<\cdots<i_k\le n} (-1)^{k+1}(2k-3) d_{i_1,\ldots,i_k} = 0.
\end{equation}
\end{Theorem}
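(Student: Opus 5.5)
The plan is to prove this by a counting argument on the torus $T=\mathbb{R}^2/\mathbb{Z}^2$. The toric arrangement with special vertex is invariant under translations by $\mathbb{Z}^2$: each hyperplane family $\{x\mid \langle v_i,x\rangle\in\mathbb{Z}\}$ is periodic. So it descends to a finite cell decomposition of $T$, provided at least two of the $v_i$ are linearly independent. If they are not, the arrangement consists of parallel lines only, is not simplicial, and the case is handled directly. I assume, as implicit in the definition, that the $v_i$ are pairwise non-parallel. The chambers of $\mathcal{A}$ are open convex polygons, and $\mathcal{A}$ is simplicial iff each of them is a triangle. Let $V,E,F$ be the numbers of vertices, edges and faces of the induced decomposition of $T$.

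First I establish the local counts.

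1. A point $p\in T$ where exactly $m_p\ge 2$ of the families meet has degree $2m_p$. Hence $2E=\sum_p 2m_p$, i.e. $E=\sum_p m_p$, and $V=\sum_p 1$.
2. Each face has at least $3$ edges, so $2E\ge 3F$, with equality iff every chamber is a triangle.
3. Since $\chi(T)=0$, we have $F=E-V$.

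Combining these, $2E\ge 3(E-V)$, i.e. $E-3V\le 0$, with equality iff $\mathcal{A}$ is simplicial. Thus simpliciality is equivalent to
\[
\sum_p (m_p-3)=0 .
\]

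Next I express this sum through the $d_I$. For $I=\{i_1,\dots,i_k\}$ with $k\ge 2$, the points of $T$ lying on all families in $I$ are the elements of $L_I/\mathbb{Z}^2$, where
\[
L_I=\{x\in\mathbb{R}^2 \mid \langle v_i,x\rangle\in\mathbb{Z} \text{ for all } i\in I\}.
\]
$L_I$ is the dual lattice of the lattice spanned by $\{v_i\}_{i\in I}$, which has rank $2$. So $|L_I/\mathbb{Z}^2|$ equals the index of $\langle v_i\mid i\in I\rangle$ in $\mathbb{Z}^2$. This index is the gcd of the $2\times 2$ minors $c_{i,j}$ (Smith normal form), namely $d_I$. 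This is where the Plücker matrix enters. Consequently
\[
\sum_{|I|=k} d_I=\sum_p \binom{m_p}{k},
\]
because each point $p$ is counted once for each $k$-subset of the $m_p$ families through it.

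It therefore suffices to verify the polynomial identity
\[
\sum_{k=2}^m (-1)^{k+1}(2k-3)\binom{m}{k}=m-3 \quad\text{for all } m\ge 2.
\]
The checks $m=2,3,4$ give $-1,0,1$. In general, use $\sum_{k\ge0}(-1)^k\binom mk=0$ and $\sum_{k\ge0}(-1)^k k\binom mk=0$ for $m\ge2$. Subtracting the $k=0,1$ terms, the left side equals
\[
-(2\cdot0-3)\cdot 1-(2-3)(-m)\cdot(-1)\cdot(-1)=\dots=m-3 .
\]
A short careful sign computation settles this.

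Summing the identity over all points $p$ turns the displayed alternating sum of the theorem into $\sum_p(m_p-3)=E-3V$, which proves the equivalence. The main obstacle is not the combinatorics but justifying the torus reduction. I must check that the faces on $T$ are genuinely open discs, namely the images of the bounded convex chambers, so that Euler's formula applies and the edge and face counts match those of $\mathcal{A}$. I also need the identification $|L_I/\mathbb{Z}^2|=d_I$, where the gcd-of-minors description of the index of a rank-$2$ sublattice of $\mathbb{Z}^2$ is used.
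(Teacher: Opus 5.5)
Your proposal is correct and follows the paper's proof. You reduce simpliciality to $\sum_p (m_p-3)=0$ over the intersection points in a fundamental domain. You count the points common to the families in $I$ as $d_I$: the paper does this via lattice points in a parallelogram, you via the dual lattice and Smith normal form. You then conclude with $\sum_{k=2}^m(-1)^{k+1}(2k-3)\binom mk=m-3$. Your Euler-characteristic argument on the torus usefully justifies the ``average multiplicity $3$'' criterion, which the paper only asserts.

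Two small corrections. First, your displayed intermediate expression actually evaluates to $3-m$; the correct bookkeeping is $0-3-(-m)=m-3$. Second, for $n\le 1$ the left-hand side is an empty sum while $\mathcal{A}$ is not simplicial. So that case cannot be ``handled directly''; it must be excluded by assuming $n\ge 2$.
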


In the last section, we prove our main result which gives a bound on the number of parallel classes of lines in a simplicial arrangement with special vertex:

\begin{Theorem}[Theorem \ref{thm:2}]
Let $v_1,\ldots,v_n\in\mathbb{Z}^2$, $\mathcal{A}$ be the corresponding toric arrangement with special vertex.
Write $v_i=(a_i,b_i)$ and assume that $\gcd(a_i,b_i)=1$ for all $i=1,\ldots,n$.
If $\mathcal{A}$ is simplicial, then $n\le 12$.
\end{Theorem}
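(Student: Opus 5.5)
The plan is to prove the statement by a combinatorial Euler-characteristic count on the torus $T=\mathbb{R}^2/\mathbb{Z}^2$, combined with a local analysis at the special vertex. By the primitivity assumption $\gcd(a_i,b_i)=1$, each parallel class $i$ descends to a single closed curve on $T$. Two such curves meet in exactly $c_{i,j}=|\det(v_i,v_j)|$ points. Let $t_m$ denote the number of points of $T$ lying on exactly $m$ of the $n$ curves.

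First, every edge lies on a curve, and a curve through $k$ vertices has $k$ edges, so $E=\sum_p m_p$. If $\mathcal{A}$ is simplicial, every face is a triangle, so $3F=2E$. Since $\chi(T)=0$, this gives
\begin{equation*}
\sum_{m\ge 2}(m-3)\,t_m=0,\qquad\text{i.e.}\qquad t_2=\sum_{m\ge 4}(m-3)\,t_m .
\end{equation*}
This is the geometric content of Theorem \ref{thm:1}, and I would take it from there. The special vertex contributes $n-3$ to the right-hand side, so simpliciality forces at least $n-3$ double points, counted against all other points of high multiplicity.

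Second, I would exploit the chambers adjacent to the special vertex. Order the directions $v_1,\ldots,v_n$ by angle in a half-plane and index them cyclically up to sign. The $2n$ chambers at the origin are bounded by consecutive lines $i,i+1$. I would show that such a chamber can only be a triangle if $c_{i,i+1}=1$. Otherwise the lattice $\mathbb{Z}v_i^\perp+\mathbb{Z}v_{i+1}^\perp$ has index $>1$, and one can exhibit a vertex of the arrangement on one of the two bounding lines closer to the origin than the intersection with any possible third side; primitivity is used here. Consequently $v_{i-1}+v_{i+1}=\alpha_i v_i$ with $\alpha_i\in\mathbb{Z}$, a frieze-type (Conway--Coxeter-like) quiddity sequence with $\sum_i\alpha_i=3n-6$. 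The same local argument at the other vertices of these triangles should give $\alpha_i\ge 1$ and further constraints. In particular, $v_{i-1}+v_{i+1}$ and $v_i$ produce specific low-multiplicity points near the origin.

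Third, and this is the main obstacle, I would bound $n$ using the displayed identity. With the sequence $(\alpha_i)$ determined, all $c_{i,j}$ are continuants in the $\alpha$'s. The multiplicities $m_p$ can be read off via the gcd's $d_{i_1,\ldots,i_k}$: a point lies on curves $i_1,\ldots,i_k$ exactly when it lies in the subgroup of index $d_{i_1,\ldots,i_k}$. I would show that each pair $(i,j)$ with large $c_{i,j}$ contributes many intersection points that are genuinely double, and that high-multiplicity points other than the origin are few, since they must sit at torsion points of small order. This makes $t_2$ grow at least quadratically in $n$, while the right-hand side grows linearly plus a controlled term, giving a contradiction for $n\ge 13$. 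I expect the delicate part to be controlling the high-multiplicity torsion points, e.g.\ $2$-torsion points where many $v_i$ with equal parity meet. I would first try to handle these by splitting the $v_i$ according to their residues modulo $2$ and $3$. If that fails, I would fall back on the classification of quiddity sequences with small entries (entries $1$ force $n\le$ small via the frieze structure) to reduce to finitely many cases, checked against the identity.
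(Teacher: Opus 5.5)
Your Step 1 is fine, since it is the content of Theorem~\ref{thm:1}. But Step 3, where the whole theorem lives, contains no argument, and its quantitative premise is false. The right-hand side $\sum_{m\ge 4}(m-3)t_m$ does not grow linearly. Fix a prime $p$. The $p^2-1$ nonzero $p$-torsion points of $T$ split into $p+1$ groups of $p-1$ points each, and the points of a group lie on exactly those classes $j$ with $v_j \bmod p$ in a fixed line of $\mathbb{F}_p^2$. Every primitive $v_j$ is nonzero mod $p$, so the group multiplicities add up to $n$. Hence these points contribute at least $(p-1)(n-3p-3)$ to the right-hand side; already for $p=2$, one of the three half-periods lies on at least $n/3$ classes. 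Summing over primes $p<n/3-1$ gives a superquadratic lower bound. So high-multiplicity points are not few, and a ``quadratic versus linear'' comparison cannot work. Even a successful growth comparison would only give some unspecified threshold, not the constant $12$. Your fallback fails too: every quiddity cycle of length $n\ge 4$ has at least two entries equal to $1$ (the ears of the triangulation), so entries $1$ do not bound $n$.

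Step 2 also fails as stated. A chamber at the origin between consecutive lines is a triangle as soon as the two nearest vertices $p,q$ on the bounding rays lie on a common line of $\mathcal{G}$: all $|(v_j,p)|,|(v_j,q)|\le 1$, so no other line can cross it. This does not force $c_{i,i+1}=1$. Take $v_1=(1,0)$, $v_2=(1,2)$, $v_3=(-1,2)$; these are primitive and satisfy the $\gcd$-condition. The chamber between $v_1^\perp$ and $v_2^\perp$ is the triangle with vertices $0$, $(0,\tfrac12)$, $(-\tfrac12,\tfrac14)$, closed by the line $-x_1+2x_2=1$, although $c_{1,2}=2$. So your reduction to a Conway--Coxeter quiddity sequence is unsupported.

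The idea you are missing is the paper's purely local argument at the special vertex. For each line $i$, let $\mu_i$ be the largest $|c_{i,j}|$ attained at least twice; these give the nearest non-double intersection points on line $i$. Simpliciality puts these indices, together with all $k$ with $|c_{i,k}|>\mu_i$, into an interval $[\sigma_i,\tau_i]$. Since there are no two consecutive double points around the origin, intervals four steps apart share at most one index. Now take the global maximum $m$ of the $\mu_i$, realized as $|c_{i,j}|=|c_{i,k}|=m$ with $j=\sigma_i$, $k=\tau_i$. The $\gcd$-condition, which is where primitivity enters, gives $m\mid c_{j,k}$, hence $|c_{j,k}|\ge m$. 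This forces each of the three cyclic gaps $j-i$, $k-j$, $i+n-k$ to be at most $4$, so $n\le 12$.
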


One of the main theorems of \cite{C14} can be obtained via this theorem since it implies that there are only finitely many cases left to consider. We thus obtain a completely different proof of a more general result.

\section{Arrangements with special vertex}

The following example motivates our main definition:

\begin{Example}
Let $v_1=(1,0,0), v_2=(1,1,0), v_3=(0,1,0)\in \mathbb{R}^3$, $e_1,e_2,e_3$ be the standard basis, and denote $v^\perp$ the hyperplane given by the orthogonal complement of $v\in \mathbb{R}^3$.
The arrangement
$$ \mathcal{A} := \{ (v_i+ k e_3)^\perp \mid i=1,\ldots,3, \:\: k \in \mathbb{Z} \}\cup \{ e_3^\perp \} $$
is the reflection arrangement of the affine Weyl group of Dynkin type $A_2$ (Figure \ref{fig:affshi} on the left).
Intersecting each hyperplane of $\mathcal{A}$ with the affine hyperplane $e_3^\perp+e_3$ gives an arrangement
$\mathcal{G}$ of lines in $\mathbb{R}^2$.
There is a fundamental domain in $\mathbb{R}^2$ such that the lines of $\mathcal{G}$ repeat periodically, thus $\mathcal{G}$ induces a toric arrangement on the quotient.
Moreover, every line is parallel to a line through the origin; the origin is therefore called a \df{special vertex} (cf.\ \cite[1.3.7]{BT72}).
\end{Example}

This concept can be generalized to the following class of arrangements:

\begin{Definition}\label{def:spvt}
Let $V=\mathbb{R}^\ell$, $v_1,\ldots,v_n\in V$, and $S_1,\ldots,S_n\subseteq \mathbb{Z}$ such that
$0\in S_i$ for $i=1,\ldots,n$. Denote $e_1,\ldots,e_{\ell+1}$ the standard basis of $\mathbb{R}^{\ell+1}$ and identify $V$ with $\langle e_1,\ldots,e_\ell\rangle$.
We say that
$$ \mathcal{A} := \{ (v_i+ k e_{\ell+1})^\perp \mid i=1,\ldots,n, \:\: k \in S_i \}\cup \{ e_{\ell+1}^\perp \} $$
is an \df{arrangement with special vertex of rank $\ell+1$}.
\end{Definition}

\begin{figure}[ht]
\reflectbox{\includegraphics[width=0.3\textwidth]{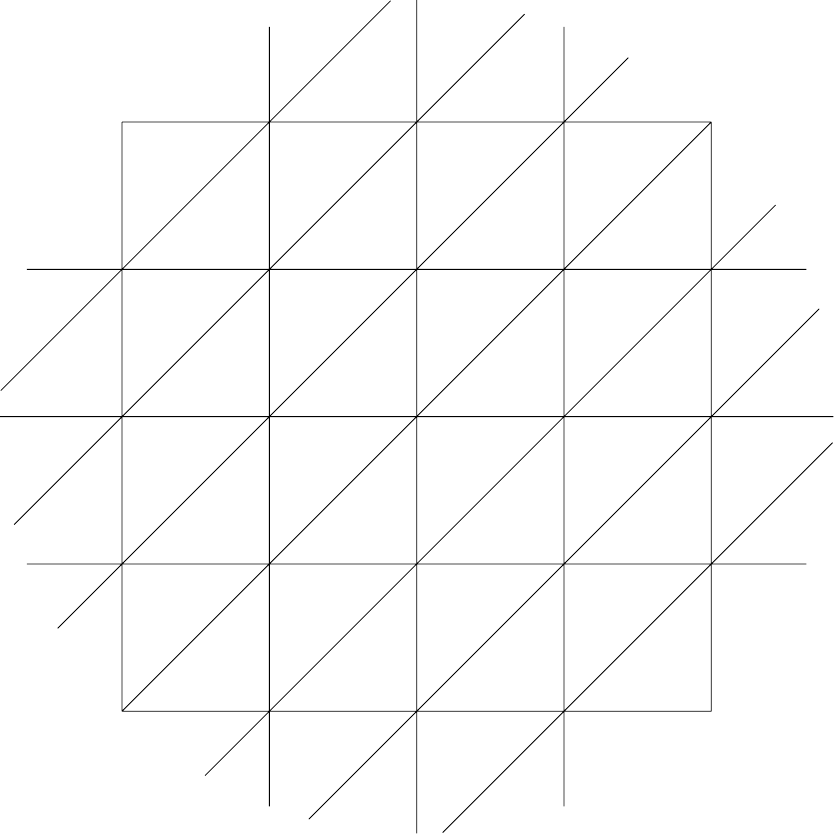}}
\reflectbox{\includegraphics[width=0.3\textwidth]{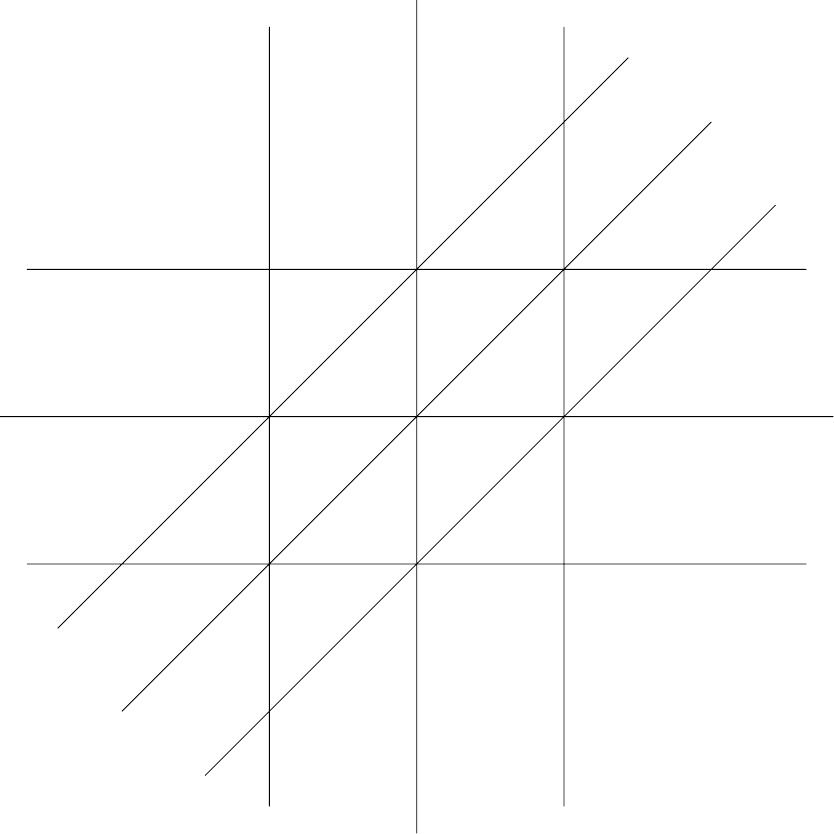}}
\caption{Affine and Shi-Catalan arrangements}
\label{fig:affshi}
\end{figure}

\begin{Example}
Let $v_1:=e_1$, $v_2:=e_2$, $v_3:=e_1+e_2$.
The arrangement
$$ \mathcal{A} := \{ (v_i+ k e_3)^\perp \mid i=1,\ldots,3, \:\: k \in \{-1,0,1\} \}\cup \{ e_3^\perp \} $$
is an example of a \df{Shi-Catalan arrangement} (Figure \ref{fig:affshi} on the right).
It is an arrangement with special vertex.
\end{Example}

In the case when all the sets $S_i$ are equal to $\mathbb{Z}$, we obtain \df{fundamental domains}:

\begin{Proposition}\label{prop:decofund}
Let $\mathcal{A}$ be an arrangement with special vertex for $v_1,\ldots,v_n\in\mathbb{Q}^\ell$ such that $\langle v_1,\ldots,v_n\rangle_{\mathbb{Q}}=\mathbb{Q}^\ell$ and
$S_i=\mathbb{Z}$ for all $i=1,\ldots,n$, let
$H_0:=e_{\ell+1}^\perp+e_{\ell+1}$ and
$$
\mathcal{G} := \{H\cap H_0 \mid H\in \mathcal{A}\}
$$
be the affine arrangement obtained by intersecting the hyperplanes of $\mathcal{A}$ with $H_0$.
Then there exists a basis $u_1,\ldots,u_\ell \in \mathbb{Q}^\ell$ such that
$$
\forall H\in \mathcal{G},\:\: w \in \sum_{i=1}^\ell \mathbb{Z}u_i\:\: :\:\:
H+w \in \mathcal{G}.
$$
\end{Proposition}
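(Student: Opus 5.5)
We first describe $\mathcal{G}$ explicitly. Identify $H_0$ with $V=\mathbb{R}^\ell$ via $x\mapsto x+e_{\ell+1}$. A point $x+e_{\ell+1}$ lies on $(v_i+ke_{\ell+1})^\perp$ if and only if $\langle v_i,x\rangle + k = 0$. The hyperplane $e_{\ell+1}^\perp$ does not meet $H_0$, so we discard it. Hence
$$\mathcal{G}=\{H_{i,k}\mid i=1,\ldots,n,\ k\in\mathbb{Z}\},\qquad H_{i,k}:=\{x\in V\mid \langle v_i,x\rangle=-k\}.$$
Translating by $w$ gives $H_{i,k}+w=\{x\mid \langle v_i,x\rangle = -k+\langle v_i,w\rangle\}=H_{i,k-\langle v_i,w\rangle}$. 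Therefore $H+w\in\mathcal{G}$ for all $H\in\mathcal{G}$ exactly when $\langle v_i,w\rangle\in\mathbb{Z}$ for all $i$. Since $S_i=\mathbb{Z}$, any integer shift of $k$ is again allowed, so this condition is also sufficient.

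It therefore suffices to show that the lattice
$$L:=\{w\in\mathbb{Q}^\ell \mid \langle v_i,w\rangle\in\mathbb{Z}\ \text{for } i=1,\ldots,n\}$$
contains a $\mathbb{Q}$-basis $u_1,\ldots,u_\ell$ of $\mathbb{Q}^\ell$. We will then have $\sum_i\mathbb{Z}u_i\subseteq L$, which is what the proposition asks for.

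To construct the basis, let $N$ be a common denominator of all coordinates of all $v_i$, so that $Nv_i\in\mathbb{Z}^\ell$. Then every $w\in\mathbb{Z}^\ell$ satisfies $\langle v_i,Nw\rangle=\langle Nv_i,w\rangle\in\mathbb{Z}$. Consequently $u_j:=Ne_j$ for $j=1,\ldots,\ell$ lie in $L$ and form a basis of $\mathbb{Q}^\ell$.

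This construction does not use the spanning hypothesis $\langle v_1,\ldots,v_n\rangle_{\mathbb{Q}}=\mathbb{Q}^\ell$. That hypothesis guarantees that $L$ is a genuine full-rank lattice, discrete rather than containing a whole line. One could therefore sharpen the statement by taking $u_1,\ldots,u_\ell$ to be the basis dual to a $\mathbb{Q}$-basis $v_{i_1},\ldots,v_{i_\ell}$ chosen among the $v_i$, then passing to the finite-index sublattice on which the remaining $v_i$, which are rational combinations of these, also pair integrally. This sharpening is what makes $\sum_i\mathbb{Z}u_i$ deserve the name ``fundamental domain'' lattice.

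The argument has no real obstacle. The only points needing care are the bookkeeping that $H+w$ is again one of the $H_{i,k}$, and the observation that rationality of the $v_i$ is essential: irrational $v_i$ could make $L$ non-full rank.
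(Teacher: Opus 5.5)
Your proof is correct and is essentially the paper's argument. Both show that a translation $w$ with $\langle v_i,w\rangle\in\mathbb{Z}$ for all $i$ sends each line to another line of $\mathcal{G}$, and both find such a basis by clearing denominators: the paper scales an arbitrary basis $\tilde u_i$ by the lcm of the denominators of the pairings with the $v_j$, and you scale the standard basis by a common denominator of the $v_i$. One small quibble: your ``exactly when'' is an overstatement, since if $v_j=2v_i$ a non-integral shift such as $w=\tfrac12 v_i/\|v_i\|^2$ still preserves $\mathcal{G}$; only sufficiency is used, so the proof is unaffected.
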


\begin{proof}
Let $\tilde u_1,\ldots,\tilde u_\ell$ be a basis of $\mathbb{Q}^\ell$ and $r_i\in \mathbb{Z}$ be the least common multiple of the denominators of
$$ (v_j,\tilde u_i), \quad j=1,\ldots,\ell. $$
Let $u_i:=r_i \tilde u_i$, $i=1,\ldots,\ell$.
Then $(v_j,u_i)\in \mathbb{Z}$ for all $i,j$.

For $H\in\mathcal{G}$, there exists a $v_i$ and $k\in\mathbb{Z}$ such that $H = (v_i+ke_{\ell+1})^\perp \cap H_0$, i.e., the points on $H$ are of the form $x+e_{\ell+1}$, $x\in V$ and $(v_i,x)=-k$.
Then for any $w\in \sum_{j=1}^\ell \mathbb{Z}u_j$,
$$ (v_i,x+w) \in \mathbb{Z}, $$
hence $x+w+e_{\ell+1}\in (v_i+k' e_{\ell+1})^\perp \cap H_0$
for $k':=-(v_i,x+w)$.
\end{proof}

\begin{Definition}
If $S_i=\mathbb{Z}$ for all $i=1,\ldots,n$ and $v_1,\ldots,v_n\in\mathbb{Q}^\ell$, then the corresponding arrangement $\mathcal{A}$ is a \df{toric arrangement with special vertex}.
The arrangement $\mathcal{G}$ defined in Proposition \ref{prop:decofund} is called the corresponding \df{affine arrangement}.
\end{Definition}

We use the notion of an affine simplicial arrangement; this is a special Tits arrangement whose Tits cone is a half space (cf.\ \cite{CMW17}):

\begin{Definition}
Let $\mathcal{A}$ be a toric arrangement with special vertex.
We say that $\mathcal{A}$
is \df{simplicial} if the connected components of the complement of $\mathcal{A}$ in
$$\{ (x_1,\ldots,x_{\ell+1})\in\mathbb{R}^{\ell+1} \mid x_{\ell+1}>0 \}$$
are open simplicial cones.
\end{Definition}

For example if $\ell=2$, simpliciality of a toric arrangement translates to the property that the corresponding affine arrangement triangulates the plane.

\begin{Example}
The arrangements in Figure \ref{G2:1315} are simplicial. The arrangement on the left is the set of reflecting hyperplanes of the affine Weyl group of type $G_2$. The arrangement on the right is a simplicial arrangement corresponding to the quiddity cycle $(1,3,1,5,1,3,1,5,1,3,1,5)$ as defined in \cite{C14}.
\end{Example}

\section{Pl\"ucker coordinates}

We now focus on the case $\ell+1=3$.
Our goal is to understand the structure of toric arrangements with special vertex. It turns out that the essential information is encoded in the matrix of Pl\"ucker coordinates:

\begin{Definition}\label{def:plueck}
Let $v_1,\ldots,v_n\in\mathbb{R}^2$. The matrix $(c_{i,j})_{1\le i,j\le n}$,
$$
c_{i,j} := \det(v_i v_j)
$$
is called the \df{Pl\"ucker matrix} for $v_1,\ldots,v_n$.
If $\mathcal{A}$ is an arrangement with special vertex of rank $3$ for $v_1,\ldots,v_n$ then we call the Pl\"ucker matrix of $v_1,\ldots,v_n$ the \df{Pl\"ucker matrix} of $\mathcal{A}$.
\end{Definition}

\begin{Remark}
If $c_{i,i+1}\ne 0$ for all $i$, then
the Pl\"ucker matrix ``is'' a tame \df{frieze pattern with coefficients} $\mathcal{F}$ as introduced in \cite[Def.\ 2.1]{CHJ20}, i.e., the map
$$
\mathcal{F} \::\: \{(i,j) \mid 1\le i\le j \le n\} \rightarrow \mathbb{R},
\quad (i,j) \mapsto c_{i,j}
$$
satisfies the \df{Ptolemy relations} (or Pl\"ucker relations)
\begin{equation}\label{eq:ptol}
c_{i,k} c_{j,\ell} = c_{i,\ell} c_{j,k} + c_{i,j} c_{k,\ell}
\end{equation}
for all $1\le i\le j\le k\le \ell\le n$.
(These hold for $c_{i,j}$ with $i\le j$, but can be extended to arbitrary $c_{i,j}$
via $c_{i,j}=-c_{j,i}$.) Note that this implies
\begin{equation}\label{eq:ptol2}
c_{j,k} = \frac{c_{i,j} c_{i+1,k} - c_{i,k} c_{i+1,j}}{c_{i,i+1}}
= \frac{1}{c_{i,i+1}} \det\begin{pmatrix}
c_{i,j} & c_{i,k} \\
c_{i+1,j} & c_{i+1,k}
\end{pmatrix}
\end{equation}
for all $i,j,k$ with $c_{i,i+1}\ne 0$.
\end{Remark}

\begin{Remark}\label{rem:sl2}
(i) By \cite[Thm.\ 3.7]{C23} in the special case $k=2$, given a matrix $(c_{i,j})_{i,j}$ with $c_{i,j}\in\mathbb{Z}\setminus\{0\}$ for $i\ne j$ satisfying the Ptolemy relations,
there exist $v_1,\ldots,v_n\in\mathbb{Z}^2$ such that
$c_{i,j}=\det(v_i v_j)$ for $1\le i,j\le n$.

\noindent
(ii) On the other hand, let $v_1,\ldots,v_n\in\mathbb{Z}^2$ be given with Pl\"ucker matrix $(c_{i,j})_{i,j}$ and let $A\in\SL_2(\mathbb{Z})$. Then $Av_1,\ldots,Av_n$ yield the same Pl\"ucker matrix $(c_{i,j})_{i,j}$.
\end{Remark}

Now assume that $v_1,\ldots,v_n\in\mathbb{Z}^2$ and $S_i=\mathbb{Z}$ for all $i=1,\ldots,n$. Then we can count the intersection points of $\mathcal{G}$ in a fundamental domain:

\begin{Lemma}\label{lem:mult}
Let $v_1,\ldots,v_n\in\mathbb{Z}^2$ and $\mathcal{G}$ be the corresponding affine arrangement.
Then $u_1=(1,0),u_2=(0,1)$ determine a fundamental domain
$$
D := \{ (x,y)\in \mathbb{R}^2 \mid 0\le x,y <1\}.
$$
A pair of classes of parallel lines of $\mathcal{G}$ corresponding to $v_i$ and $v_j$ intersect exactly
$|c_{i,j}|=|\det(v_i v_j)|$ times in $D$.
\end{Lemma}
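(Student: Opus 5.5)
The plan has two parts: first verify that the unit square $D$ is a fundamental domain, then count intersection points by solving a $2\times 2$ integer linear system.

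For the first part, I follow the proof of Proposition \ref{prop:decofund} with $\tilde u_1=(1,0)$, $\tilde u_2=(0,1)$. Since $v_j\in\mathbb{Z}^2$, all pairings $(v_j,\tilde u_i)$ are integers, so every $r_i=1$ and $u_i=\tilde u_i$. Proposition \ref{prop:decofund} then says $\mathcal{G}$ is invariant under translation by $\mathbb{Z}^2$. The half-open square $D$ is a set of representatives for $\mathbb{R}^2/\mathbb{Z}^2$, so it is a fundamental domain.

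For the counting, fix $i,j$ with $c_{i,j}\neq 0$; if $c_{i,j}=0$ the two classes are parallel and meet nowhere, which agrees with the formula. Identifying $H_0$ with $\mathbb{R}^2$, the lines of class $i$ are $\{x \mid (v_i,x)=-k\}$ for $k\in\mathbb{Z}$, i.e.\ $(v_i,x)\in\mathbb{Z}$. A point $x$ therefore lies on a line of class $i$ and on a line of class $j$ exactly when $Mx\in\mathbb{Z}^2$, where $M$ is the integer matrix with rows $v_i$ and $v_j$. Two non-parallel lines meet in a single point, so the intersection points of the two classes are precisely the set $M^{-1}\mathbb{Z}^2$. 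This set is a lattice $\Lambda$ containing $\mathbb{Z}^2$, because $M$ has integer entries and hence $M\mathbb{Z}^2\subseteq\mathbb{Z}^2$.

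The number of points of $\Lambda$ in $D$ equals the index $[\Lambda:\mathbb{Z}^2]$, since $D$ contains exactly one representative of each coset of $\mathbb{Z}^2$. To compute the index, I apply the isomorphism $M$, which carries $\Lambda$ to $\mathbb{Z}^2$ and $\mathbb{Z}^2$ to $M\mathbb{Z}^2$. Hence $[\Lambda:\mathbb{Z}^2]=[\mathbb{Z}^2:M\mathbb{Z}^2]=|\det M|$, which equals $|\det(v_i v_j)|=|c_{i,j}|$ because transposing does not change the determinant.

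The only step needing justification is that the index of a full-rank sublattice $M\mathbb{Z}^2$ in $\mathbb{Z}^2$ is $|\det M|$. I would cite Smith normal form: writing $M=PDQ$ with $P,Q\in\mathrm{GL}_2(\mathbb{Z})$ and $D$ diagonal, the quotient $\mathbb{Z}^2/M\mathbb{Z}^2$ is isomorphic to $\mathbb{Z}^2/D\mathbb{Z}^2$, whose order is the product of the diagonal entries. Alternatively, a covolume comparison gives the same result. Beyond this standard fact, the argument only requires keeping track of the half-open boundary of $D$ so that no point is counted twice.
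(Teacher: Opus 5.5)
Your proof is correct and is essentially the paper's argument: the $\mathbb{Z}^2$-invariance is the same direct check. Your computation $[M^{-1}\mathbb{Z}^2:\mathbb{Z}^2]=[\mathbb{Z}^2:M\mathbb{Z}^2]=|\det M|$ is the paper's count of integer points in the half-open parallelogram $M(D)$ spanned by the columns of $M$ (the $k=2$ case of Lemma \ref{lem:inter}), with the ``lattice points $=$ area'' fact justified explicitly. One small correction to your aside: when $c_{i,j}=0$ the two classes do not ``meet nowhere'' but always share lines (both contain the line through the origin), so that degenerate case lies outside the lemma's scope rather than agreeing with the formula.
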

\begin{proof}
Write $v_i=(a,b)$ and let $(x,y)$ be a point on a parallel to $v_i^\perp$.
Then $(a,b,k)\cdot (x,y,1)=0$ for some $k\in\mathbb{Z}$.
If $c,d\in \mathbb{Z}$, then
$$
0=(a,b,k+m)\cdot (x+c,y+d,1)=ax+by+k+ac+bd+m\in \mathbb{Z}
$$
for $m\in\mathbb{Z}$, thus $(x+c,y+d,1)$ also lies on some parallel to $v_i^\perp$.
Hence $D$ is a fundamental domain.
The second assertion comes from the fact that $|c_{i,j}|$ is the volume of the parallelotope spanned by $v_i$ and $v_j$ (cf.\ Lemma \ref{lem:inter}).
\end{proof}

\begin{Lemma}($\gcd$-condition)\label{lem:gcd}
Let $v_1,\ldots,v_n\in\mathbb{Z}^2$ and $(c_{ij})_{i,j}$ be the Pl\"ucker matrix.
Write $v_i=(a_i,b_i)$ and assume that $\gcd(a_i,b_i)=1$ for all $i=1,\ldots,n$. Then for all $1\le i<j<k\le n$:
\begin{equation}\label{gcd3}
\gcd(c_{i,j},c_{i,k}) = \gcd(c_{i,j},c_{j,k}) = \gcd(c_{i,k},c_{j,k}).
\end{equation}
\end{Lemma}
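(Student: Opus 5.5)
The plan is to reduce to a statement about a single prime. Because a gcd is determined by its $p$-adic valuations, it suffices to show, for every prime $p$ and every $e\ge 1$, that $p^e$ divides both members of one pair exactly when it divides both members of any other pair. Concretely, we show that if $p^e\mid c_{i,j}$ and $p^e\mid c_{i,k}$, then $p^e\mid c_{j,k}$; the other implications follow by permuting the indices. Two tools are available. The first is the Ptolemy/Pl\"ucker relation in the degenerate form valid for any three vectors in $\mathbb{Z}^2$:
$$c_{j,k}v_i=c_{i,k}v_j-c_{i,j}v_k,$$
which is Cramer's rule, equivalently the vanishing of a $3\times 3$ determinant with a repeated row. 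The second is the primitivity of $v_i$.

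For the key step, write $N:=p^e$ and work in $(\mathbb{Z}/N)^2$. Applying the identity above, the hypotheses $N\mid c_{i,j}$ and $N\mid c_{i,k}$ give $c_{j,k}v_i\equiv 0 \pmod N$. Since $\gcd(a_i,b_i)=1$, there are integers $x,y$ with $xa_i+yb_i=1$. Taking this linear combination of the two coordinates yields $c_{j,k}\equiv 0\pmod N$. This argument never used that $N$ is a prime power, so it works for any modulus $N$.

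It remains to conclude. Set $g:=\gcd(c_{i,j},c_{i,k})$. Applying the argument with $N=g$ shows $g\mid c_{j,k}$, so $g\mid\gcd(c_{i,j},c_{j,k})$. Running the same argument with the roles of $i$ and $j$ exchanged, using primitivity of $v_j$ and the analogous Cramer identity for $c_{i,k}v_j$, gives the reverse divisibility. By symmetry, using $v_k$, all three gcds coincide.

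The only real subtlety is bookkeeping of signs ($c_{i,j}=-c_{j,i}$), which does not affect divisibility, and writing the Cramer identity correctly. So I expect no genuine obstacle; the essential input is that a primitive vector is nonzero modulo every $N$, and in fact is part of a basis.
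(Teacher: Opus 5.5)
Your proof is correct and is essentially the paper's argument. The paper uses primitivity to move $v_i$ to $(1,0)$ by an $\SL_2(\mathbb{Z})$ base change, so that $c_{j,k}=a_jc_{i,k}-a_kc_{i,j}$ is visibly an integer combination of $c_{i,j}$ and $c_{i,k}$; you obtain the same combination basis-free from $c_{j,k}v_i=c_{i,k}v_j-c_{i,j}v_k$ and Bézout coefficients for $v_i$, then permute the indices exactly as the paper does (and, as you note yourself, the prime-power reduction is unnecessary).
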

\begin{proof}
Up to a base change in $\SL_2(\mathbb{Z})$ (Remark \ref{rem:sl2}), $v_i$ may be chosen to be $(1,0)$. Then $c_{i,j}=b_j$, $c_{i,k}=b_k$, $c_{j,k}=a_jb_k-a_kb_j$.
Let $g:=\gcd(c_{i,j},c_{i,k})=\gcd(b_j,b_k)$. Then $g$ divides $c_{j,k}$, thus $g$ divides $\gcd(c_{i,j},c_{j,k})$ and $\gcd(c_{i,k},c_{j,k})$ as well.
The same argument with $j$ and $k$ instead of $i$ gives the claim.
\end{proof}

\begin{Definition}\label{def:gcd}
We say that $v_1,\ldots,v_n\in\mathbb{Z}^2$ satisfy the \df{$\gcd$-condition}
if the Pl\"ucker matrix satisfies Equation \eqref{gcd3} for all $1\le i<j<k \le n$.
\end{Definition}

\section{Simpliciality via Pl\"ucker coordinates}

In this section,
let $v_1,\ldots,v_n\in\mathbb{Z}^2$, $\mathcal{A}$ resp.\ $\mathcal{G}$ be the corresponding toric resp.\ affine arrangement with special vertex and $(c_{i,j})_{i,j}$ be the Pl\"ucker matrix.

For $k\ge 2$ and $1\le i_1<\ldots<i_k \le n$ let
\begin{equation}\label{eq:di}
d_{i_1,\ldots,i_k}:=\vol(v_{i_1},\ldots,v_{i_k}):=\gcd\{ c_{i,j} \mid i,j\in\{i_1,\ldots,i_k\}\}.
\end{equation}
\begin{Lemma}\label{lem:inter}
The number of intersection points of lines of the classes $i_1,\ldots,i_k$ in the fundamental domain $D$ is exactly
$d_{i_1,\ldots,i_k}$.
\end{Lemma}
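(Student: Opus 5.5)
The plan is to describe the common intersection points explicitly as a subgroup of the torus $\mathbb{R}^2/\mathbb{Z}^2$ and then count it with Smith normal form. A point $x=(x_1,x_2)\in\mathbb{R}^2$ lies on a line of class $i$ if and only if $(v_i,x)\in\mathbb{Z}$; this was already seen in the proof of Lemma~\ref{lem:mult}. So the points of $D$ lying on lines of all classes $i_1,\ldots,i_k$ correspond bijectively to
$$
\Lambda/\mathbb{Z}^2,\qquad \Lambda:=\{x\in\mathbb{R}^2 \mid (v_{i_s},x)\in\mathbb{Z} \text{ for } s=1,\ldots,k\}.
$$
Here I read ``intersection points of lines of the classes $i_1,\ldots,i_k$'' as points lying simultaneously on a line of each of these classes. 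I assume the $v_{i_s}$ span $\mathbb{R}^2$; otherwise all $c_{i,j}$ vanish, $d=0$, and the classes are parallel, so there are no such points, or the statement is degenerate.

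Let $M$ be the $k\times 2$ integer matrix with rows $v_{i_1},\ldots,v_{i_k}$ and let $L:=M^T\mathbb{Z}^k=\langle v_{i_1},\ldots,v_{i_k}\rangle_{\mathbb{Z}}\subseteq\mathbb{Z}^2$. Then $\Lambda=L^*$ is the dual lattice of $L$. Since $L$ is a full-rank sublattice of $\mathbb{Z}^2=(\mathbb{Z}^2)^*$, we have $\mathbb{Z}^2\subseteq L^*$ and
$$
|L^*/\mathbb{Z}^2| = |\mathbb{Z}^2/L| = [\mathbb{Z}^2:L].
$$
One way to see this is to bring $M$ to Smith normal form, $M=P\,\mathrm{diag}(e_1,e_2)\,Q$ with $P\in \mathrm{GL}_k(\mathbb{Z})$ and $Q\in\mathrm{GL}_2(\mathbb{Z})$. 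Then in suitable coordinates $L^*=\frac1{e_1}\mathbb{Z}\oplus\frac1{e_2}\mathbb{Z}$, and both indices equal $e_1e_2$.

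It remains to show that $[\mathbb{Z}^2:L]=e_1e_2$ equals $d_{i_1,\ldots,i_k}$. This is the standard fact that the product of the invariant factors equals the gcd of the maximal ($2\times2$) minors of $M$. These minors are exactly the $\det(v_{i_s}v_{i_t})=c_{i_s,i_t}$. The gcd of the $2\times 2$ minors is invariant under left and right multiplication by unimodular matrices (Cauchy--Binet), and for the diagonal form it equals $e_1e_2$.

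For $k=2$ this recovers the count $|c_{i,j}|$ of Lemma~\ref{lem:mult}. The main care point is not the algebra but the bookkeeping: I have to make sure each point of $D$ is counted once (the map from $\Lambda\cap D$ to $\Lambda/\mathbb{Z}^2$ is a bijection because $D$ is a fundamental domain by Lemma~\ref{lem:mult}), and I have to make the interpretation of the statement precise.
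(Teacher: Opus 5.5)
Your proof is correct and is essentially the paper's argument: both reduce to the system $(v_{i_\nu},p)\in\mathbb{Z}$ for $p\in D$, and both identify the count with the gcd of the $2\times 2$ minors of the $k\times 2$ matrix $M$ whose rows are $v_{i_1},\ldots,v_{i_k}$. The paper pushes the points forward by $M$ and counts points of $\mathbb{Z}^k$ in the half-open parallelogram spanned by the columns $x,y$, citing the gcd-of-minors fact without proof; you instead count $L^*/\mathbb{Z}^2$ with $L=\langle v_{i_1},\ldots,v_{i_k}\rangle_{\mathbb{Z}}$ and prove $[\mathbb{Z}^2:L]=d_{i_1,\ldots,i_k}$ via Smith normal form, and your reading of ``intersection points'' (points lying on a line of every listed class) is exactly the one the inclusion--exclusion in Theorem~\ref{thm:1} needs.
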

\begin{proof}
An intersection point in $D$ has rational coordinates, thus it is of the form $(r,s)$ for $r,s\in\mathbb{Q}$ with $0\le r,s<1$, and it satisfies
$(v_{i_\nu},(r,s)) \in \mathbb{Z}$
for $\nu=1,\ldots,k$.
With
$$ x:=((v_{i_1})_1,\ldots,(v_{i_k})_1),\quad y:=((v_{i_1})_2,\ldots,(v_{i_k})_2),$$
this condition translates to
$rx+sy\in \mathbb{Z}^k$. We are thus counting the number of integral points in the parallelotope spanned by $x$ and $y$, which
is $\vol(x,y)=\vol(v_{i_1},\ldots,v_{i_k})$.
\end{proof}

\begin{Theorem}\label{thm:1}
Let $d_{i_1,\ldots,i_k}$ be defined as in Equation (\ref{eq:di}).
Then $\mathcal{A}$ is simplicial if and only if
\begin{equation}\label{eq:simp}
\sum_{k\ge 2} \sum_{1\le i_1<\cdots<i_k\le n} (-1)^{k+1}(2k-3) d_{i_1,\ldots,i_k} = 0.
\end{equation}
\end{Theorem}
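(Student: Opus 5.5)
We plan to prove the criterion via Euler characteristic on the torus $T=\mathbb{R}^2/\mathbb{Z}^2$. The affine arrangement $\mathcal{G}$ descends to a cell decomposition of $T$ by Lemma \ref{lem:mult}. Simpliciality of $\mathcal{A}$ means exactly that every face of $\mathcal{G}$ is a triangle. Let $V$, $E$, $F$ denote the numbers of vertices, edges and faces in $D$. Then $V-E+F=0$. Counting edge–face incidences gives $2E=\sum_{\text{faces}} (\#\text{sides})\ge 3F$, with equality if and only if all faces are triangles. The faces are convex polygons, since they are intersections of half-planes. They are bounded, provided at least two directions are linearly independent; we assume this, since otherwise there are no vertices and the statement degenerates. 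Combining $F=E-V$ with $2E\ge 3F$ gives $3V-E\ge 0$, i.e.\ $2E\ge 3F$ is equivalent to $E\le 3V$, with equality if and only if the arrangement is simplicial. So the theorem reduces to proving
$$3V-E=\sum_{k\ge 2} \sum_{i_1<\cdots<i_k} (-1)^{k+1}(2k-3)\, d_{i_1,\ldots,i_k}.$$

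Next we express $V$ and $E$ through the multiplicities of vertices. For a vertex $p$ let $m(p)$ be the number of line classes through $p$; this is also the number of lines through $p$, since each class $i$ contributes at most one line through $p$. Each line through $p$ gives two edge-ends at $p$, so $2E=\sum_p 2m(p)$ and hence $E=\sum_p m(p)$. Therefore
$$3V-E=\sum_p \bigl(3-m(p)\bigr).$$
We must also check that every line actually meets vertices, so that no closed curves without vertices occur. This holds as soon as there are at least two non-parallel directions, because $c_{i,j}\neq 0$ for some pair.

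Finally we convert to the $d$'s by inclusion–exclusion. By Lemma \ref{lem:inter}, $d_{I}$ for $I=\{i_1,\ldots,i_k\}$ counts the points of $D$ lying on lines of every class in $I$. Hence $d_I=\#\{p : I\subseteq M(p)\}$, where $M(p)$ is the set of classes through $p$. Two points need care here. First, points with $m(p)\le 1$ are not vertices; the $d$'s only count points with $|M(p)|\ge k\ge 2$, so these points do not contribute. Second, if $d_I$ is zero because some $c_{i,j}=0$, i.e.\ parallel repeated directions, the formula should be checked in that case as well. We then have
$$\sum_{|I|\ge 2}(-1)^{|I|+1}(2|I|-3)\,d_I=\sum_p \sum_{k=2}^{m}\binom{m}{k}(-1)^{k+1}(2k-3),\qquad m=m(p).$$

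The main computational step is the binomial identity $\sum_{k=2}^{m}\binom{m}{k}(-1)^{k+1}(2k-3)=3-m$ for $m\ge 2$. We verify it using $\sum_{k=0}^m\binom{m}{k}(-1)^k=0$ and $\sum_k k\binom{m}{k}(-1)^k=0$ for $m\ge 2$. The full sum $\sum_{k\ge0}(-1)^{k+1}(2k-3)\binom mk$ equals $0$. The $k=0$ term is $(-1)(-3)=3$ and the $k=1$ term is $(+1)(-1)m=-m$. So the sum over $k\ge 2$ is $-(3-m)=m-3$.

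So the correct sign is $\sum_p (m(p)-3)=E-3V\le 0$. The obstacle is bookkeeping this sign carefully: equation \eqref{eq:simp} is equivalent to $E=3V$, i.e.\ equality in $2E\ge 3F$, which is simpliciality. Since the theorem only asserts vanishing, the overall sign is immaterial.
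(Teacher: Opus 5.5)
Your argument is correct and is essentially the paper's proof. The Euler characteristic count on the torus ($V-E+F=0$, $2E\ge 3F$ with equality iff all faces are triangles, and $E=\sum_p m(p)$) justifies the step the paper only asserts, namely that simpliciality is equivalent to $\sum_{k\ge 2}(k-3)\nu_k=0$; your inclusion--exclusion via Lemma~\ref{lem:inter} and the binomial identity (with sign $m-3$) is the same as the paper's computation.

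The parallel case you leave open should be excluded by hypothesis rather than checked. If $c_{i,j}=0$ for some $i\ne j$, Lemma~\ref{lem:inter} fails and so does the criterion: adding a second copy of $(1,0)$ to $(1,0),(0,1),(1,1)$ leaves the simplicial arrangement unchanged but makes the left-hand side equal to $2$. The paper also tacitly assumes the $v_i$ are pairwise non-parallel.
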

\begin{proof}
Since the corresponding affine arrangement $\mathcal{G}$ is toric, it is a simplicial line arrangement if and only if the intersection points in the fundamental domain have multiplicity $3$ in average. 
We show with Lemma \ref{lem:inter} that this is equivalent to Equation \ref{eq:simp}:

Let $\nu_k$ be the number of intersection points of multiplicity $k$ in the fundamental domain. Then $\mathcal{A}$ is simplicial if and only if
$$
\sum_{k\ge 2} (k-3) \nu_k = 0.
$$
By Lemma \ref{lem:inter},
$$
\sum_{1\le i_1<\cdots<i_k\le n} d_{i_1,\ldots,i_k} = \sum_{s\ge k} \binom{s}{k} \nu_s.
$$
Hence
\begin{eqnarray*}
\sum_{k\ge 2} (-1)^{k+1} (2k-3)\sum_{1\le i_1<\cdots<i_k\le n} d_{i_1,\ldots,i_k} &=&
\sum_{k\ge 2} (-1)^{k+1} (2k-3) \sum_{s\ge k} \binom{s}{k} \nu_s\\
&=&\sum_{s\ge 2} \left(\sum_{2\le k\le s} (-1)^{k+1} (2k-3) \binom{s}{k}\right) \nu_s
\\
&=& \sum_{s\ge 2} (s-3) \nu_s
\end{eqnarray*}
since $\sum_{k=0}^s (-1)^{k+1} k \binom{s}{k} = 0$
and $\sum_{k=0}^s (-1)^k \binom{s}{k} = 0$.
\end{proof}

\section{Maximal simplicial arrangements with special vertex}

The statement of the following Lemma is illustrated in Figure \ref{G2p1p2} for the example of the affine Weyl group of type $G_2$.

\begin{figure}
    \centering
    \includegraphics[width=0.4\linewidth]{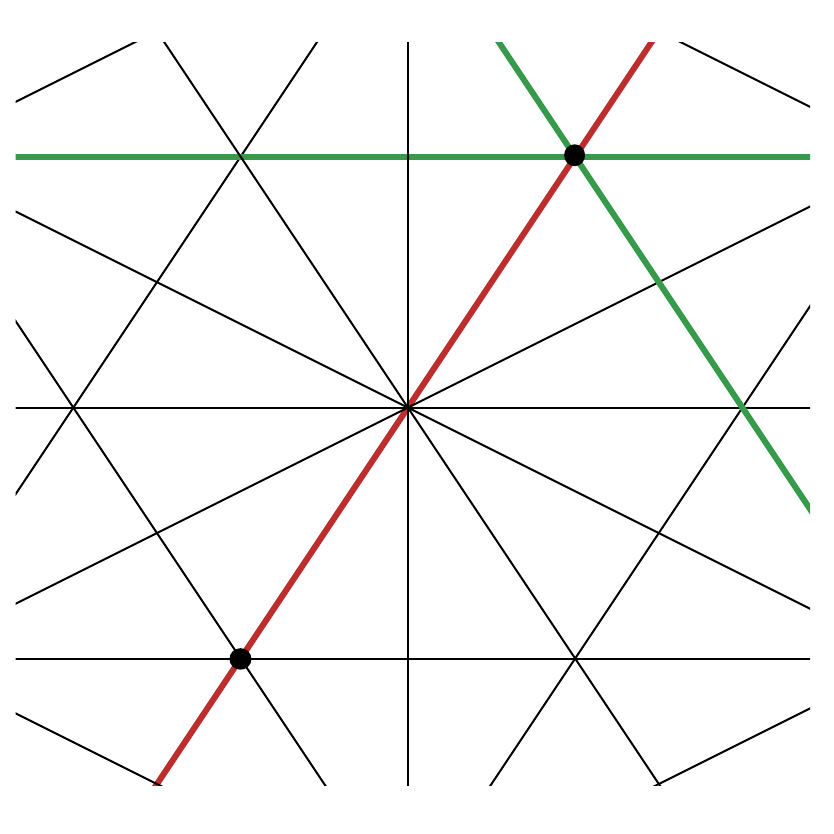}
    \caption{Points $p_1,p_2$ on the red line as in Lemma \ref{lem:close}}
    \label{G2p1p2}
\end{figure}

\begin{Lemma}\label{lem:close}
For each line $v_i^\perp$ through the origin, there are two intersection points closest to the origin, say $p_1,p_2$. Then $p_1=-p_2$ and
$$
\{ H\in \mathcal{G} \mid p_1 \in H \text{ or } p_2 \in H\} =
\{v_i^\perp\} \cup \{ v_j^\perp \pm e_3 \mid j=1,\ldots,n,\:\: |c_{i,j}|=m \}
$$
where $m:=\max\{ |c_{i,k}|\mid k=1,\ldots,n\}$.
\end{Lemma}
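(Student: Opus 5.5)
Work in the affine plane $H_0$, where the lines of $\mathcal{G}$ are $\{x\in\mathbb{R}^2 \mid (v_j,x)=-k\}$, $k\in\mathbb{Z}$, and $v_i^\perp$ is the line $L:=\{x \mid (v_i,x)=0\}$ through the origin. (The notation $v_j^\perp\pm e_3$ in the statement is read as the two lines $(v_j,x)=\mp 1$, i.e.\ the hyperplanes $(v_j\pm e_3)^\perp\cap H_0$ adjacent to $v_j^\perp$.) Parametrize $L$ by $x=t\,w$ with $w:=(-b_i,a_i)$ for $v_i=(a_i,b_i)$, so $(v_j,tw)=t\,c_{i,j}$ up to a global sign convention on $c_{i,j}$. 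The approach is to determine exactly which values of $t$ are intersection points, and then read off which lines pass through the closest ones.

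First, a point $tw$ with $t\neq 0$ lies on a line of class $j\neq i$ iff $t\,c_{i,j}\in\mathbb{Z}$, and it lies on no line of class $i$ other than $L$. Hence the intersection points on $L$ are $\{tw \mid t\in \bigcup_{j:\,c_{i,j}\ne 0}\frac{1}{c_{i,j}}\mathbb{Z}\}$. Classes with $c_{i,j}=0$ are parallel to $L$ and contribute nothing. Every class $j$ with $c_{i,j}\neq 0$ meets $L$ in a discrete set $\frac{1}{|c_{i,j}|}\mathbb{Z}\,w$, so the whole set is discrete, symmetric under $t\mapsto -t$, and its nonzero elements of smallest absolute value are $t=\pm\frac{1}{m}$ with $m=\max_k|c_{i,k}|$. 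This defines $p_1=\frac1m w$ and $p_2=-\frac1m w=-p_1$, establishing the first claim. If $m=0$ (all classes parallel) there are no such points; I will assume tacitly that the $v_j$ span $\mathbb{R}^2$, as in Proposition \ref{prop:decofund}.

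Next, I determine the lines through $p_1$. The line $\{(v_j,x)=-k\}$ contains $p_1$ iff $c_{i,j}/m=-k\in\mathbb{Z}$. Since $|c_{i,j}|\le m$, this forces either $c_{i,j}=0$, which together with $k=0$ gives $L$ itself or a line through the origin parallel to $L$, or $|c_{i,j}|=m$ with $k=\mp1$ according to the sign of $c_{i,j}$. Thus the lines through $p_1$ are $L$ together with, for each $j$ with $|c_{i,j}|=m$, exactly one of the two lines $(v_j,x)=\pm1$. The lines through $p_2=-p_1$ are $L$ together with the opposite one. Taking the union over $p_1$ and $p_2$ yields exactly $\{v_i^\perp\}\cup\{v_j^\perp\pm e_3 \mid |c_{i,j}|=m\}$.

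The main obstacle is bookkeeping rather than substance. I need to handle classes $j$ with $c_{i,j}=0$, that is, $v_j$ parallel to $v_i$. The statement implicitly assumes that $v_i^\perp$ is the only such line through the origin; if two of the $v_j$ are proportional, their zero lines coincide and give the same element of $\mathcal{G}$. I also need to fix the identification of $v_j^\perp\pm e_3$ with the shifted lines and the sign conventions. Once the parametrization $(v_j,tw)=t\,c_{i,j}$ is in place, the remaining steps are immediate.
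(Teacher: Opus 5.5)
Your proof is correct and complete, and it takes a more direct route than the paper.

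The paper first applies an $\SL_2(\mathbb{Z})$ base change to get $v_i=(d,0)$. It then uses Lemma~\ref{lem:mult} (classes $i$ and $j$ meet $|c_{i,j}|$ times in the fundamental domain $D$) and divides by the $d$ parallels of $v_i^\perp$ in $D$. From this it concludes that class $j$ meets $v_i^\perp$ exactly $|c_{i,j}|/d$ times per period, hence that the closest points come from the $j$ with $|c_{i,j}|$ maximal. The facts $p_1=-p_2$ and the exact list of lines through $p_1,p_2$ are left to the reader.

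You instead parametrize $v_i^\perp$ by $tw$ with $w=(-b_i,a_i)$. For this choice $(v_j,tw)=t\,c_{i,j}$ holds exactly, so no sign convention is needed, and you get the exact positions $\frac{1}{|c_{i,j}|}\mathbb{Z}\,w$ of the class-$j$ intersection points. This buys several things:
\begin{itemize}
\item It makes explicit that each class contributes a lattice through the origin. That is the fact silently justifying the paper's passage from ``densest'' to ``closest''.
\item It yields $p_1=-p_2$ by symmetry.
\item It shows directly that only the shifts $k=\pm1$ of the classes with $|c_{i,j}|=m$ pass through $p_1,p_2$, and that classes parallel to $v_i$ contribute nothing beyond $v_i^\perp$ itself.
\item It uses neither the integrality of the $v_j$ nor the fundamental-domain count, so it holds verbatim for real $v_j$.
\end{itemize}

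The paper's version is shorter and stays within the toric counting framework of Lemmas~\ref{lem:mult} and~\ref{lem:inter} that it uses throughout. However, it is only a sketch of what you prove in full.
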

\begin{proof}
Up to a base change in $\SL_2(\mathbb{Z})$, without loss of generality $v_i=(d,0)$ for some $0<d\in\mathbb{Z}$.
By Lemma \ref{lem:mult}, parallels to $v_i^\perp$ and $v_j^\perp$ intersect $|c_{i,j}|$ times in the fundamental domain. Thus parallels to $v_j^\perp$ intersect $v_i^\perp$ exactly $|c_{i,j}|/d$ in the fundamental domain. So the closest intersection points are those for which $|c_{i,j}|$ is maximal.
\end{proof}

\begin{Definition}
We consider the relation $\le$ on $\mathbb{Z}^2$ with
$$ (a,b)\le (c,d) \quad :\Longleftrightarrow\quad ad\le cb.$$
A finite set $\{v_1,\ldots,v_n\}$ is totally ordered by $\le$ if $\det(v_iv_j)\ne 0$ for all $i\ne j$.
\end{Definition}

Viewing $v_1,\ldots,v_n$ as lines through the special vertex, this sorts the lines in a way which is compatible with chambers at the vertex.

\begin{Lemma}\label{lem:clvt}
Let $v_1>\ldots>v_n$ and write $c_{i,j+n}:=-c_{i,j}=c_{j,i}$ for $1\le i,j\le n$.
For $i=1,\ldots,n$, let
$$
m_i:=\max\{ |c_{i,k}|\mid k=1,\ldots,n\}, \quad
M_i:=\{j\in\{i+1,\ldots,i+n-1\} \mid |c_{i,j}|=m_i\}.
$$
Assume that all the chambers around the origin are simplicial cones.
Then there exist $j_1,\ldots,j_n$ with $i\le j_i\le i+n$ for $i=1,\ldots,n$ such that
$$
M_i\subseteq \{j_i,\ldots,j_{i+1}\}
$$
for all $i=1,\ldots,n$ where $j_{n+1}:=j_1+n$.
Moreover, $j_i,j_{i+1}\in M_i$ for all $i=1,\ldots,n$.
\end{Lemma}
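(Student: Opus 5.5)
The plan is to read off $j_1,\ldots,j_n$ from the simplicial chambers at the origin and then use Lemma \ref{lem:close} to locate the lines through the closest points. Since $v_1>\cdots>v_n$, the lines $v_1^\perp,\ldots,v_n^\perp$ through the origin appear in this angular order, so the $2n$ rays from the origin alternate through the indices $1,\ldots,n,n+1,\ldots,2n$. I will use the convention $c_{i,j+n}=-c_{i,j}$ so that index $j+n$ stands for the opposite orientation of $v_j$.

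First, fix $i$ and look at the chamber $C$ bounded by the rays of $v_{i-1}^\perp$ and $v_i^\perp$ that are adjacent at the origin (indices read cyclically). Because $C$ is a simplicial cone, in $\mathcal{G}$ it is a triangle. Two of its sides lie on these rays, and its vertices on them are the intersection points closest to the origin on each ray. By Lemma \ref{lem:close}, the point on the ray of $v_i^\perp$ is one of $\pm p$, where $p$ is the closest point on $v_i^\perp$. The third side of $C$ is a parallel $v_k^\perp\pm e_3$ passing through that point, so again by Lemma \ref{lem:close} we have $|c_{i,k}|=m_i$. Choosing the representative $k$ or $k+n$ according to the side of the origin on which this line lies gives the index $j_i$. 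By construction $j_i$ lies in $M_i$, and symmetrically in $M_{i-1}$, which is how $j_i$ relates to both neighbouring rays.

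Next, I fix the closest point $p$ on the ray of $v_i^\perp$ and consider the pencil of lines of $\mathcal{G}$ through $p$. Besides $v_i^\perp$, Lemma \ref{lem:close} says these are exactly the parallels $v_j^\perp\pm e_3$ with $j\in M_i$, taken with the appropriate sign. The triangles at the origin on either side of this ray both have $p$ as a vertex. Their third sides are therefore the two lines through $p$ that are angularly closest to $v_i^\perp$, one on each side, namely the lines indexed by $j_i$ and $j_{i+1}$. Consequently all other lines through $p$ lie in the angular sector between them. Since $\le$ is compatible with angular order, this sector corresponds exactly to the cyclic index interval $\{j_i,\ldots,j_{i+1}\}$, which gives $M_i\subseteq\{j_i,\ldots,j_{i+1}\}$ together with $j_i,j_{i+1}\in M_i$. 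The bounds $i\le j_i\le i+n$ come from the fact that the third side of $C$ must meet both rays on the far side of the origin, which pins down its direction relative to $v_i$.

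The main obstacle is the bookkeeping of indices and signs. Specifically, I must show that the orientation chosen for each third side, encoded by adding $n$ or not, is consistent, so that the angular sector at $p$ translates into a contiguous index range with $j_{n+1}=j_1+n$ and the indices $j_i$ weakly increasing. My approach is to normalise via $\SL_2(\mathbb{Z})$ (Remark \ref{rem:sl2}) so that $v_i=(d,0)$, as in the proof of Lemma \ref{lem:close}. I will then check directly that a parallel $v_j^\perp+e_3$ through $p$ makes a positive angle with the ray exactly when $j$ lies in the range $i+1,\ldots,i+n-1$ with the correct sign of $c_{i,j}$. Monotonicity then follows from going around the origin once.
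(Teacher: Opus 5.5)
Your proposal is correct and is essentially the paper's argument: the two simplicial chambers at the origin adjacent to ray $i$ have their third sides through the closest point $p$ (Lemma~\ref{lem:close}). Since these are the extreme lines of the pencil at $p$ and slopes increase with the indices, they are $\min M_i=j_i$ and $\max M_i=j_{i+1}$, and $j_{i+1}$ is also the smallest element of $M_{i+1}$. Your orientation bookkeeping is more explicit than the paper's, but the bounds $i\le j_i\le i+n$ need no geometric argument: they follow at once from $j_i\in M_i\subseteq\{i+1,\ldots,i+n-1\}$, which also forces the choice of representative.
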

\begin{proof}
For each $i$, $M_i$ consists of the indices of the lines meeting line $i$ at a point closest to the origin (Lemma \ref{lem:close}). As $v_j$ are ordered, the slopes of these lines are increasing with the indices. Thus the last index $j$ in $M_i$ labels the line which closes the chamber between line $i$ and $i+1$; by the same argument $j$ is also the first index in $M_{i+1}$. Therefore $j$ is the greatest element of $M_i$ and the smallest element of $M_{i+1}$. With $j_{i+1}:=j$ we obtain the intervals as claimed.
\end{proof}

The key for our main result is to consider intersection points behind the double points:

\begin{Lemma}\label{lem:clvt2}
Let $v_1>\ldots>v_n\in\mathbb{Z}^2$, $\mathcal{A}$ be the corresponding toric arrangement with special vertex and $(c_{i,j})_{i,j}$ be the Pl\"ucker matrix.
Write $c_{i,j+n}:=-c_{i,j}=c_{j,i}$ for $1\le i,j\le n$.
For $i=1,\ldots,n$, let
\begin{eqnarray*}
\mu_i&:=&\max\{ |c_{i,k}|\mid k=1,\ldots,n, \:\: |\{j\in\{1,\ldots,n\} \mid |c_{i,j}|=|c_{i,k}|\}|>1 \}, \\
N_i&:=&\{j\in\{i+1,\ldots,i+n-1\} \mid |c_{i,j}|=\mu_i\}.
\end{eqnarray*}
If $\mathcal{A}$ is simplicial,
then there exist $\sigma_i,\tau_i\in\mathbb{Z}$ for $i=1,\ldots,n$ such that
$$
N_i\subseteq \{\sigma_i,\ldots,\tau_i\}
\quad
\text{and}
\quad
[\sigma_i,\tau_i]\cap [\sigma_{i+4},\tau_{i+4}]\le 1
$$
for all $i=1,\ldots,n$,
where for $i>n$, $\sigma_i:=\sigma_{i-n}$ and $\tau_i:=\tau_{i-n}$.
Moreover, if $|c_{i,k}|>\mu_i$ for some $k$, then $k\in \{\sigma_i,\ldots,\tau_i\}$.
\end{Lemma}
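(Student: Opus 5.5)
Along the line $v_i^\perp$ through the origin we order the intersection points by distance from the origin, and we read off which lines pass through them, as in Lemma \ref{lem:close}. After a base change in $\SL_2(\mathbb{Z})$ (Remark \ref{rem:sl2}) we may take $v_i=(d,0)$. The argument of Lemma \ref{lem:close} then shows the following: the parallels to $v_j^\perp$ meet $v_i^\perp$ at the points $\pm t\,\frac{d}{|c_{i,j}|}$, $t\in\mathbb{Z}$, measured along $v_i^\perp$ in suitable units. So the lines $v_j^\perp\pm e_3$ with $|c_{i,j}|=\mu$ meet $v_i^\perp$ at distance proportional to $1/\mu$. Consequently the points $p_1=-p_2$ of Lemma \ref{lem:close} are at distance $\propto 1/m_i$, and the next intersection points on $v_i^\perp$ further out are governed by the values $|c_{i,j}|$ in decreasing order.

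The quantity $\mu_i$ is the largest value of $|c_{i,\cdot}|$ attained at least twice. Every larger value is attained by a single index $k$, and the corresponding point on $v_i^\perp$ is a double point of $\mathcal{G}$, apart from multiples of points of larger value. The first point on $v_i^\perp$ through which at least two lines other than $v_i^\perp$ pass is therefore the point at parameter $1/\mu_i$ (the ``point behind the double points''). The lines through it are exactly $v_i^\perp$ and the shifted copies of $v_j^\perp$ with $j\in N_i$.

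We will define $\sigma_i$ and $\tau_i$ from simpliciality. The segment of $v_i^\perp$ between the origin and this point is cut only by double points, namely the lines with index $k$ and $|c_{i,k}|>\mu_i$. Simpliciality (every chamber a triangle) forces the chambers adjacent to this segment on each side to form a fan of triangles. Ordering the slopes as in Lemma \ref{lem:clvt}, the indices of all lines through the segment and through its endpoint then form a contiguous cyclic interval. We set $[\sigma_i,\tau_i]$ to be this interval of indices, running from the line closing the chamber at the origin (the $j_i$ of Lemma \ref{lem:clvt}) to the last line through the $\mu_i$-point. This gives $N_i\subseteq\{\sigma_i,\ldots,\tau_i\}$ and the ``moreover'' claim at once.

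The main obstacle is the separation $|[\sigma_i,\tau_i]\cap[\sigma_{i+4},\tau_{i+4}]|\le 1$. My plan is a planarity/monotonicity argument. The triangles glued along $v_i^\perp$ and along $v_{i+1}^\perp$ share the chamber at the origin between lines $i$ and $i+1$. Its third side is line $j_{i+1}$, which is the upper end for $i$ and the lower end for $i+1$. Consequently the intervals $[\sigma_i,\tau_i]$ advance monotonically with $i$, each consecutive pair overlapping only near its boundary. I would then show that after four steps the regions swept out on the lines $i,\dots,i+4$ are disjoint apart from one possible common boundary index. The reason is that any common index $k$ in both intervals would give a line crossing both $v_i^\perp$ and $v_{i+4}^\perp$ inside the triangulated neighbourhood, contradicting the order of the slopes. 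Concretely, I would first try to prove $\sigma_{i+1}\ge\sigma_i$ and $\tau_{i+1}\ge\tau_i$ together with $\sigma_{i+2}\ge\tau_i-1$ or a similar bound, and then iterate. If two steps already give the separation, four certainly does. I expect the real work to lie in handling the cases where the $\mu_i$-point coincides with a point of maximal multiplicity shared with a neighbouring line.
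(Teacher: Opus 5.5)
There is a genuine gap: the explicit choice of endpoints is wrong in one of the two cases, and the separation claim, which is the real content of the lemma, is only sketched and the sketch would not work.

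\textbf{The endpoints.} Your explicit choice fails when the point $P$ closest to the origin on $v_i^\perp$ is a double point, i.e.\ it lies only on line $i$ and one parallel of some $v_j^\perp$. Your fan-of-triangles picture is the right tool; here is what it gives in that case.
\begin{itemize}
\item Both chambers at the origin adjacent to line $i$ are closed by that parallel, so $j_i=j_{i+1}=j$.
\item The two chambers beyond it are triangles $PQS$ and $PQT$. Here $Q$ is the next vertex on line $i$, and $S$, $T$ are the closest points on lines $i-1$, $i+1$.
\item Their third sides $k_1\in M_{i-1}$ and $k_2\in M_{i+1}$ pass through $Q$, hence belong to $N_i$. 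They lie on opposite sides of $j$ in the slope order.
\end{itemize}
An interval starting at $j_i$ therefore misses $k_1$. For $v_1=(1,0)$ in the paper's example, $(|c_{1,2}|,\dots,|c_{1,12}|)=(1,3,2,7,5,8,3,7,4,5,1)$. So $j_1=j_2=7$ and $N_1=\{5,9\}$, while your interval is $[7,9]$. The paper takes $[\sigma_i,\tau_i]=[k_1,k_2]$ in this case and $[j_i,j_{i+1}]$ otherwise.

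Relatedly, your reason why the first non-double point is the one at parameter $1/\mu_i$ is not valid as stated. If $6d$ and $3d$ each occur once among the $|c_{i,k}|$, the point at parameter $1/3$ is already non-double. Under simpliciality the claim does hold, by the same picture. The first $k_1$-point $d/|c_{i,k_1}|$ on the ray lies in $(P,Q]$: it is at most $Q$ because $Q$ is a $k_1$-point, and beyond $P$ because only $j$ attains $m_i$. So it equals $Q$, and likewise for $k_2$. Hence $|c_{i,k_1}|=|c_{i,k_2}|=\mu_i$, and exactly one double point precedes $Q$.

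\textbf{The separation.} Here you give only a plan, and it points the wrong way.
\begin{itemize}
\item Two steps do not suffice, and your tentative bound $\sigma_{i+2}\ge\tau_i-1$ is false. In the same example $[\sigma_1,\tau_1]=[5,9]$ and $[\sigma_3,\tau_3]=[7,11]$ share three indices.
\item Consecutive intervals do not just overlap at their boundaries: $[\sigma_2,\tau_2]=[7,9]$ is contained in $[\sigma_1,\tau_1]$.
\item Your intended contradiction (``a common index gives a line crossing both $v_i^\perp$ and $v_{i+4}^\perp$'') does not work. An index is a direction rather than a line, and most indices in such an interval belong to no line through the relevant points.
\end{itemize}
What you are missing are the consequences of the chamber picture above. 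The closest points $S$, $T$ on lines $i\pm1$ lie on the parallel of $v_j^\perp$ and on $k_1$ resp.\ $k_2$. So simpliciality forbids double closest points on two consecutive lines. Consecutive intervals are then linked by $\sigma_i\in[\sigma_{i-1},\tau_{i-1}]$ and $\tau_i\in[\sigma_{i+1},\tau_{i+1}]$, with $\tau_i=\sigma_{i+1}$ when neither $i$ nor $i+1$ has a double closest point. These relations are what the paper's proof uses.

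A clean way to finish from here: in both cases $\sigma_l\ge j_{l-1}$ and $\tau_l\le j_{l+2}$. This follows from $M_{l-1}\subseteq[j_{l-1},j_l]$ and $M_{l+1}\subseteq[j_{l+1},j_{l+2}]$ in Lemma~\ref{lem:clvt}. Hence $\tau_i\le j_{i+2}\le j_{i+3}\le\sigma_{i+4}$, and the two intervals share at most one index.
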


\begin{figure}
\centering
\begin{picture}(160,160)
\put(0,0){\includegraphics[width=0.4\textwidth]{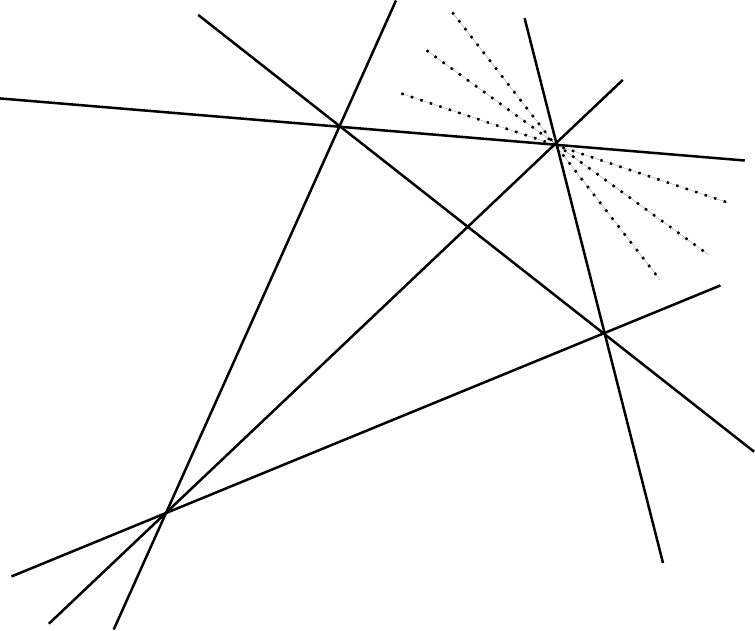}}
\put(85,80){$i$}
\put(30,113){$\tau_i$}
\put(138,30){$\sigma_i$}
\end{picture}
\hspace{30pt}
\begin{picture}(160,160)
\put(0,0){\includegraphics[width=0.4\textwidth]{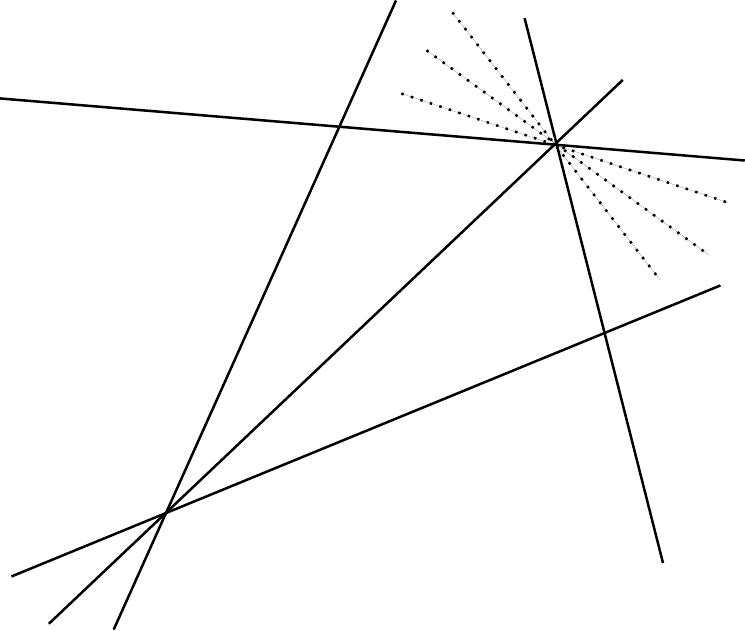}}
\put(85,80){$i$}
\put(30,113){$\tau_i$}
\put(138,30){$\sigma_i$}
\end{picture}
\caption{Proof of Lemma \ref{lem:clvt2}: case (1), case (2)}
\label{fig:prlem2}
\end{figure}

\begin{proof}
For each $i$, $N_i$ consists of the indices of the lines meeting line $i$ at a point closest to the origin that is not a double point (Lemma \ref{lem:close}). As $v_j$ are ordered, the slopes of these lines are increasing with the indices.
Thus there are two cases (Figure \ref{fig:prlem2}):\\
(1) The point closest to the origin on line $i$ is a double point $P$:
The point $P$ is the intersection point of line $i$ and a parallel to line $j$ for some index $j$.
There are four chambers around $P$, two of them are adjacent to the origin. The other two are beyond the parallel to $j$ and have walls $i,j$ and $k_1$ respectively $k_2$. Thus $k_1$ and $k_2$ are incident to the closest point to $i-1$ respectively $i+1$.
We set $\sigma_i:=k_1$, $\tau_i:=k_2$ and obtain
$N_i\subseteq \{\sigma_i,\ldots,\tau_i\}$.
\\
(2) The point closest to the origin on line $i$ is not a double point:
The two chambers adjacent to line $i$ and the origin have walls $(i-1,k_1,i)$ and $(i,k_2,i+1)$ for some $k_1<k_2$.
Again, $k_1$ and $k_2$ are incident to the closest point to $i-1$ respectively $i+1$, we set $\sigma_i:=k_1$, $\tau_i:=k_2$. and obtain $N_i\subseteq \{\sigma_i,\ldots,\tau_i\}$.
\smallskip\\
The definitions of $\sigma_i,\tau_i$ yield
$$
\sigma_i \in [\sigma_{i-1},\tau_{i-1}], \quad
\tau_i \in [\sigma_{i+1},\tau_{i+1}]
$$
for all $i=1,\ldots,n$, and $\tau_i=\sigma_{i+1}$ if $i$ and $i+1$ are both in case (2).
\\
Moreover, for $i\ne j$, $[\sigma_i,\tau_i]$ and $[\sigma_j,\tau_j]$ can have at most one common element if the points closest to the origin at lines $i$ and $j$ are not double points.
Since $\mathcal{A}$ is simplicial, there are no consecutive double points.
Thus even if every second point around the origin is a double point, $[\sigma_i,\tau_i]$ and $[\sigma_{i+4},\tau_{i+4}]$ can have at most $1$ common element. (The distance $4$ is only necessary when $i,i+2$, and $i+4$ have double points.)
\end{proof}

\begin{Example}
The quiddity cycle $(1,3,1,5,1,3,1,5,1,3,1,5)$ defines a Weyl groupoid of rank two which has the set
$$
R_+ := \{
(1, 0), (1, 1), (2, 3), (1, 2), (3, 7), (2, 5), (3, 8), (1, 3), (2, 7), (1, 4), (1, 5), (0, 1)\}
$$
as a set of positive roots.
With $R_+=:\{v_1,\ldots,v_{12}\}$ one obtains the following Pl\"ucker matrix:
$$
\begin{array}{ccccccccccccccccccc}
&&&&&\ddots &&&&&&&&&&&&& \\
0 & 1 & 1 & 2 & 1 & \fcolorbox{blue}{white}{3} & 2 & \fcolorbox{blue}{white}{3} & 1 & 2 & 1 & 1 & 0 &  &  &  &  &  & \\
 & 0 & 1 & 3 & 2 & \fcolorbox{green}{white}{7} & 5 & \fcolorbox{blue}{white}{8} & 3 & \fcolorbox{green}{white}{7} & 4 & 5 & 1 & 0 &  &  &  &  & \\
 &  & 0 & 1 & 1 & 4 & 3 & \fcolorbox{blue}{white}{5} & 2 & \fcolorbox{blue}{white}{5} & 3 & 4 & 1 & 1 & 0 &  &  &  & \\
 &  &  & 0 & 1 & 5 & 4 & \fcolorbox{green}{white}{7} & 3 & \fcolorbox{blue}{white}{8} & 5 & \fcolorbox{green}{white}{7} & 2 & 3 & 1 & 0 &  &  & \\
 &  &  &  & 0 & 1 & 1 & 2 & 1 & \fcolorbox{blue}{white}{3} & 2 & \fcolorbox{blue}{white}{3} & 1 & 2 & 1 & 1 & 0 &  & \\
 &  &  &  &  & 0 & 1 & 3 & 2 & \fcolorbox{green}{white}{7} & 5 & \fcolorbox{blue}{white}{8} & 3 & \fcolorbox{green}{white}{7} & 4 & 5 & 1 & 0 & \\
  &  &  &  &  &  & 0 & 1 & 1 & 4 & 3 & \fcolorbox{blue}{white}{5} & 2 & \fcolorbox{blue}{white}{5} & 3 & 4 & 1 & 1 & 0 \\
&&&&&&&&&&&&&\ddots &&&&&
\end{array}$$
The entries in the blue boxes mark the indices $j_1,\ldots,j_n$ as in Lemma \ref{lem:clvt}; the green boxes correspond to the pairs $\sigma_i,\tau_i$ from Lemma \ref{lem:clvt2}.
For example, the blue boxes $\fcolorbox{blue}{white}{8}$ represent a double point closest to the special vertex. The green boxes $\fcolorbox{green}{white}{7}$ are the triple points behind the double points.
\end{Example}

\begin{Theorem}\label{thm:2}
Let $v_1,\ldots,v_n\in\mathbb{Z}^2$, $\mathcal{A}$ be the corresponding toric arrangement with special vertex.
Write $v_i=(a_i,b_i)$ and assume that $\gcd(a_i,b_i)=1$ for all $i=1,\ldots,n$.
If $\mathcal{A}$ is simplicial, then $n\le 12$.
\end{Theorem}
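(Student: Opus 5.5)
We plan to use Lemma \ref{lem:clvt2} as the main combinatorial input and combine it with a counting argument around the special vertex. First we normalize. Since $\gcd(a_i,b_i)=1$, no two of the $v_i$ are parallel, because distinct parallel primitive vectors would define the same line. Hence $c_{i,j}\ne 0$ for $i\ne j$, the relation $\le$ totally orders the $v_i$, and after relabeling we may assume $v_1>\cdots>v_n$. By Lemma \ref{lem:gcd} the $\gcd$-condition holds. Simpliciality of $\mathcal{A}$ gives simplicial chambers around the origin, so Lemmas \ref{lem:clvt} and \ref{lem:clvt2} apply. We extend all indices cyclically, using $c_{i,j+n}=-c_{i,j}$.

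Next we show that the intervals $[\sigma_i,\tau_i]$ from Lemma \ref{lem:clvt2} advance monotonically around the cycle of length $n$ as $i$ runs from $1$ to $n$. The relations $\sigma_i\in[\sigma_{i-1},\tau_{i-1}]$ and $\tau_i\in[\sigma_{i+1},\tau_{i+1}]$ from the proof of Lemma \ref{lem:clvt2} say that consecutive intervals overlap and are shifted forward. After $n$ steps the intervals return shifted by exactly $n$, just as the $j_i$ in Lemma \ref{lem:clvt} satisfy $j_{n+1}=j_1+n$. Now we use the condition that $[\sigma_i,\tau_i]$ and $[\sigma_{i+4},\tau_{i+4}]$ share at most one element. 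Grouping the indices $i,i+4,i+8,\dots$, the intervals in such a chain are nearly disjoint. So their total length, summed over the $n/4$ or so members, is at most about $n$ plus the overlaps.

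The main step is a lower bound on the length $\tau_i-\sigma_i$ of each interval. We want each interval to cover at least about a third of the lines, giving a bound like $\tau_i-\sigma_i\ge n/3$ or similar, so that four nearly disjoint intervals spanning one revolution force $n\le 12$. We would obtain this from the Pl\"ucker matrix. We show that the entries $|c_{i,k}|$ for $k$ between $i$ and $i+n$ are unimodal, with peak near the maximal entries $M_i$. We prove unimodality using the Ptolemy relations \eqref{eq:ptol} together with the ordering, which makes all $c_{i,k}$ with $i<k<i+n$ positive. The $\gcd$-condition then ensures that the entries equal to $\mu_i$ cannot lie too close to $i$. With unimodality in hand, the positions of $\mu_i$ and $m_i$ force $\sigma_i\le i+n/3$ and $\tau_i\ge i+2n/3$, roughly. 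The "moreover" part of Lemma \ref{lem:clvt2} helps here: it places every index with $|c_{i,k}|>\mu_i$ inside $[\sigma_i,\tau_i]$, so the interval contains the entire region above level $\mu_i$.

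I expect this lower bound on interval length to be the hard part. The unimodality and position estimates likely require a case split according to whether the vertex closest to the origin on line $i$ is a double point, matching cases (1) and (2) in Lemma \ref{lem:clvt2}. Once the bound is in place, the final step is summation. Adding the inequality over $i,i+4,i+8$ and using the wrap-around by $n$ gives $4\cdot(\text{length})\le n+O(1)$. Combined with the lower bound on the length, this yields $n\le 12$, and the example with quiddity cycle $(1,3,1,5,\dots)$ shows the bound is attained.
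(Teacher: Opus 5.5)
There is a genuine gap at the step you flag as the hard part, and it cannot be repaired as stated.

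\textbf{Unimodality is false.} In the paper's extremal example (quiddity cycle $(1,3,1,5,\dots)$, $n=12$) one has $c_{1,k}=b_k$, which for $k=2,\dots,12$ reads $1,3,2,7,5,8,3,7,4,5,1$. The Ptolemy relations and positivity give nothing like unimodality; frieze rows oscillate in general.

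\textbf{The interval bound is false.} More seriously, the per-row bound $\tau_i-\sigma_i\ge n/3$ fails in the same example. For $i=2$ one has $c_{2,k}=b_k-a_k$, i.e.\ $1,1,4,3,5,2,5,3,4,1$ for $k=3,\dots,12$. So $m_2=\mu_2=5$ and $N_2=\{7,9\}$. The point closest to the origin on line $2$ is a triple point (case (2)), so $[\sigma_2,\tau_2]=[j_2,j_3]=[7,9]$ has length $2<4=n/3$. There are no entries above $\mu_2$, so the ``moreover'' clause of Lemma \ref{lem:clvt2} gives no extra room. In this example only the case-(1) rows have intervals of length $n/3$; the case-(2) rows have half that. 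A uniform lower bound summed along a chain $i,i+4,i+8,\dots$ therefore has no valid input. Even with correct input, an additive $O(1)$ slack would not yield the sharp value $12$. You would need that successive intervals in a chain share at most an endpoint, so their lengths sum to at most $n$ times the number of revolutions.

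\textbf{The missing idea.} One row suffices, and the $\gcd$-condition is used through divisibility, not as a position estimate. Take $m=\max_i\mu_i$ and a row $i$ attaining it, with $j=\sigma_i$, $k=\tau_i$ and $|c_{i,j}|=|c_{i,k}|=m$.
\begin{itemize}
\item By Lemma \ref{lem:gcd}, $m=\gcd(c_{i,j},c_{i,k})$ divides $c_{j,k}\neq 0$, so $|c_{j,k}|\ge m$.
\item By maximality of $m$ together with the ``moreover'' part of Lemma \ref{lem:clvt2}, the lines $i,j,k$ lie pairwise in one another's intervals, with indices read cyclically.
\item The property that $[\sigma_i,\tau_i]$ and $[\sigma_{i+4},\tau_{i+4}]$ share at most one element then bounds each of the three cyclic gaps $j-i$, $k-j$, $i+n-k$ by $4$.
\end{itemize}
Adding the three gaps gives $n\le 12$. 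This triangle argument is what replaces your attempted lower bound on all interval lengths.
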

\begin{proof}
We use the notation from Lemma \ref{lem:clvt2}.
Let $m:=\max\{\mu_1,\ldots,\mu_n\}$ be the global maximum among the intersection points around the origin skipping the double points. Choose an $i$ such that $m$ appears in row $i$, say $m=|c_{i,j}|=|c_{i,k}|$ for $j=\sigma_i$, $k=\tau_i$, without loss of generality $i<j<k$.

By the $\gcd$-condition (Lemma \ref{lem:gcd}), $m=\gcd(c_{i,j},c_{i,k})$ divides $c_{j,k}$.
Thus $|c_{j,k}|\ge m$; by Lemma \ref{lem:clvt2} this implies $j\in[\sigma_k,\tau_k]$ and $k\in[\sigma_j,\tau_j]$,
even if $|c_{j,k}|>m$.

By the maximality of $m$, we have $\sigma_j\le k\le \tau_j$.
But $\tau_i=k$, so Lemma \ref{lem:clvt2} gives $j-i\le 4$.

Since $|c_{j,i}|=m$ we also have
$\sigma_j\le k<i+n\le \tau_j$. Again, $|c_{k,i}|=m$, so $\sigma_k\le i+n\le \tau_k$ and $k-j\le 4$ by Lemma \ref{lem:clvt2}.

Finally, $|c_{k,j}|\ge m$ gives $\sigma_k\le j\le \tau_k$ and $i+n-k\le 4$ ($j=\sigma_{i+n}$).

Adding the three inequalities gives
$$n=(j-i)+(k-j)+(i+n-k)\le 4+4+4\le 12.$$
\end{proof}

\bibliographystyle{amsalpha}

\begin{thebibliography}{CMW17}

\bibitem[BT72]{BT72}
F.~Bruhat and J.~Tits, \emph{Groupes r{\'e}ductifs sur un corps local}, Publ.
  Math., Inst. Hautes {\'E}tud. Sci. \textbf{41} (1972), 5--251 (French).

\bibitem[CH15]{CH10}
M.~Cuntz and I.~Heckenberger, \emph{Finite {W}eyl groupoids}, J. Reine Angew.
  Math. \textbf{702} (2015), 77--108.

\bibitem[CHJ20]{CHJ20}
M.~Cuntz, T.~Holm, and P.~J{\o}rgensen, \emph{Frieze patterns with
  coefficients}, Forum Math. Sigma \textbf{8} (2020), 36, Id/No e17.

\bibitem[CMW17]{CMW17}
M.~Cuntz, B.~M\"uhlherr, and Ch.~J. Weigel, \emph{Simplicial arrangements on
  convex cones}, Rend. Semin. Mat. Univ. Padova \textbf{138} (2017), 147--191.

\bibitem[Cun11]{C10}
M.~Cuntz, \emph{Crystallographic arrangements: Weyl groupoids and simplicial
  arrangements}, Bull. London Math. Soc. \textbf{43} (2011), no.~4, 734--744.

\bibitem[Cun14]{C14}
M.~Cuntz, \emph{Frieze patterns as root posets and affine triangulations}, Eur.
  J. Comb. \textbf{42} (2014), 167--178.

\bibitem[Cun23]{C23}
M.~Cuntz, \emph{Grassmannians over rings and subpolygons}, Int. Math. Res. Not.
  \textbf{2023} (2023), no.~9, 8078--8099.

\bibitem[Del72]{D72}
P.~Deligne, \emph{Les immeubles des groupes de tresses g\'en\'eralis\'es},
  Invent. Math. \textbf{17} (1972), 273--302.

\bibitem[Gr{\"u}09]{G09}
B.~Gr{\"u}nbaum, \emph{A catalogue of simplicial arrangements in the real
  projective plane}, Ars Math.~Contemp. \textbf{2} (2009), no.~1, 25 pp.

\bibitem[Mel41]{M41}
E.~Melchior, \emph{\"{U}ber {V}ielseite der projektiven {E}bene}, Deutsche
  Math. \textbf{5} (1941), 461--475.

\bibitem[PS21]{PS21}
G.~Paolini and M.~Salvetti, \emph{Proof of the {{\(K(\pi,1)\)}} conjecture for
  affine {Artin} groups}, Invent. Math. \textbf{224} (2021), no.~2, 487--572
  (English).

\end{thebibliography}

\end{document}